\crefname{equation}{}{}
\newtheorem{theorem}{Theorem}[section]
\newtheorem{proposition}[theorem]{Proposition}
\newtheorem{lemma}[theorem]{Lemma}
\newtheorem{conjecture}[theorem]{Conjecture}
\newtheorem*{question*}{Question} \Crefname{question}{Question}{Questions}
\theoremstyle{definition}
\theoremstyle{remark}
\newtheorem*{remark}{Remark}
\newcommand{\mb}{\mathbb}
\newcommand{\on}{\operatorname}
\title{An improved bound on the least common multiple of polynomial sequences}
\author[Sah]{Ashwin Sah}
\address{Massachusetts Institute of Technology, Cambridge, MA 02139, USA}
\email{asah@mit.edu}
\date{\today}
\begin{document}

\begin{abstract}
Cilleruelo conjectured that if $f\in\mb Z[x]$ of degree $d\ge 2$ is irreducible over the rationals, then $\log\on{lcm}(f(1),\ldots,f(N))\sim(d-1)N\log N$ as $N\to\infty$. He proved it for the case $d = 2$. Very recently, Maynard and Rudnick proved there exists $c_d > 0$ with $\log\on{lcm}(f(1),\ldots,f(N))\gtrsim c_d N\log N$, and showed one can take $c_d = \frac{d-1}{d^2}$. We give an alternative proof of this result with the improved constant $c_d = 1$. We additionally prove the bound $\log\on{rad}\on{lcm}(f(1),\ldots,f(N))\gtrsim\frac{2}{d}N\log N$ and make the stronger conjecture that $\log\on{rad}\on{lcm}(f(1),\ldots,f(N))\sim (d-1)N\log N$ as $N\to\infty$.
\end{abstract}

\maketitle

\section{Introduction}\label{sec:introduction}
If $f\in\mb Z[x]$, let $L_f(N) = \on{lcm}\{f(n): 1\le n\le N\}$, where say we ignore values of $0$ in the LCM and set the LCM of an empty set to be $1$. It is a well-known consequence of the Prime Number Theorem that
\[\log\on{lcm}(1,\ldots,N)\sim N\]
as $N\to\infty$. Therefore, a similar linear behavior should occur if $f$ is a product of linear polynomials. See the work of Hong, Qian, and Tan \cite{HQT12} for a more precise analysis of this case. On the other hand, if $f$ is irreducible over $\mb Q$ and has degree $d\ge 2$, $\log L_f(N)$ ought to grow as $N\log N$ rather than linearly. In particular, Cilleruelo \cite{C11} conjectured the following growth rate.
\begin{conjecture}[{\cite{C11}}]\label{conj:lcm}
If $f\in\mb Z[x]$ is irreducible over $\mb Q$ and has degree $d\ge 2$, then
\[\log L_f(N)\sim (d-1)N\log N\]
as $N\to\infty$.
\end{conjecture}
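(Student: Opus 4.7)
The conjecture amounts to matching upper and lower bounds on $\log L_f(N)$, and I would treat the two directions separately.

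The upper bound $\log L_f(N) \leq (d-1+o(1)) N\log N$ is the more accessible side. Starting from the identity
\[\log L_f(N) \;=\; \log\prod_{n=1}^N |f(n)| \;-\; \sum_p \Bigl(\sum_{n \leq N} v_p(f(n)) - \max_{n \leq N} v_p(f(n))\Bigr) \log p,\]
and using $\log\prod_{n=1}^N |f(n)| = dN \log N + O(N)$, it suffices to lower-bound the ``repetition'' sum by $(1 + o(1)) N \log N$. For primes $p \leq N$ one has $\sum_n v_p(f(n)) \sim N \rho_f(p)/(p-1)$, where $\rho_f(p)$ is the number of roots of $f$ mod $p$, and a Chebotarev/Landau estimate gives $\sum_{p \leq N} \rho_f(p) \log p / p \sim \log N$, yielding the desired $N \log N$.

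For the much harder lower bound, I would focus on primes $p > N$: such a prime divides $f(n)$ for at most $\rho_f(p) \leq d$ values of $n \in [1,N]$, since distinct $n$'s give distinct residues mod $p$. Then $\max_{n \leq N} v_p(f(n)) \geq \tfrac{1}{d} \sum_{n \leq N} v_p(f(n))$ per prime, so
\[\log L_f(N) \;\geq\; \sum_{p > N} \max_n v_p(f(n)) \log p \;\geq\; \frac{1}{d}\sum_{n \leq N} \log f^{(>N)}(n),\]
where $f^{(>N)}(n)$ is the $N$-rough part of $f(n)$. Combined with $\sum_n \log|f(n)| \sim dN \log N$ and a Chebotarev estimate for the $N$-smooth contribution $\sum_n \log f^{(\leq N)}(n) \sim N \log N$, this already yields $\log L_f(N) \gtrsim \tfrac{d-1}{d} N \log N$, improving Maynard--Rudnick's $(d-1)/d^2$.

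To push toward the conjectured constant $d-1$, the loss is in bounding the multiplicity uniformly by $d$. I would refine by splitting primes $p > N$ according to $k = |\{n \leq N : p \mid f(n)\}|$ and applying $\max \geq (1/k) \sum$ per prime, so that ``isolated'' primes ($k=1$) pass through with no loss. The main obstacle is controlling the higher-multiplicity contribution: when $p \mid f(n_1), f(n_2)$ with $n_1 \neq n_2$ in $[1,N]$, one obtains $p \mid g(n_1, n_2)$ where $g(x,y) = (f(x)-f(y))/(x-y)$ has total degree $d-1$, so for $p \gtrsim N^{d-1}$ this forces the algebraic condition $g(n_1,n_2) = 0$ with only $O(N)$ solutions, rendering the very-large-prime contribution tame. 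The genuinely difficult range is $p \in [N, N^{d-1}]$, where double-hits are common but each costs at most $(d-1)\log p$; balancing these two effects by a more delicate prime-counting argument is presumably how the paper reaches $c_d = 1$, while the conjectured $c_d = d-1$ appears to rest on genuinely new control of the joint distribution of roots of $f$ modulo primes in this intermediate range.
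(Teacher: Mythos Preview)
The statement is Cilleruelo's conjecture, which the paper does \emph{not} prove; the paper only establishes $\log L_f(N)\gtrsim N\log N$, i.e.\ the constant $c_d=1$. You correctly acknowledge that the full conjecture is out of reach, so the meaningful comparison is between your partial bound and the paper's.

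Your overall framework---write $Q(N)=\prod_{n\le N}|f(n)|$, show via Chebotarev that the $p\le N$ contribution is $\sim N\log N$, and then control large primes through their multiplicity---is exactly the paper's, and your upper-bound argument is correct (it is essentially Cilleruelo's). Your averaging inequality $\max_{n} v_p(f(n))\ge\tfrac{1}{d}\sum_{n} v_p(f(n))$ for $p>N$ is valid and yields $c_d=(d-1)/d$, which does beat $(d-1)/d^2$ but falls short of the paper's $c_d=1$.

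The step you are missing---and your guess that the paper reaches $c_d=1$ by ``a more delicate prime-counting argument'' in the range $[N,N^{d-1}]$ is not what happens---is a higher-order version of your two-point divided difference. If $p^i\mid f(m_1),\ldots,f(m_{d-i+1})$ for distinct $m_j\in[1,N]$, the paper considers
\[A=\sum_{j=1}^{d-i+1}\frac{f(m_j)}{\prod_{k\ne j}(m_j-m_k)},\]
an integer of size $O_f(N^i)$ divisible by $p^i$; for $p>DN$ with $D=1+d|f_d|$ this forces $A=0$, and a leading-term estimate then gives a contradiction. The case $i=1$ uses $d$ interpolation points rather than two and gives $A=f_d\ne 0$ outright, so for every $p>DN$ there are at most $d-1$ values $n\le N$ with $p\mid f(n)$. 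Replacing $1/d$ by $1/(d-1)$ in your averaging step (and absorbing primes in $(N,DN]$ into an $O(N)$ error) yields $c_d=1$ directly. Your $g(x,y)$ is the two-point divided difference, of degree $d-1$, which only bites once $p\gg N^{d-1}$; the paper's insight is that using $d$ points collapses the degree to $0$, so a merely linear threshold on $p$ suffices.
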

He proved this for $d = 2$. As noted in \cite{MR19}, his argument demonstrates
\begin{equation}\label{eq:upper-bound}\tag{1.1}
\log L_f(N)\lesssim (d-1)N\log N.
\end{equation}
Hong, Luo, Qian, and Wang \cite{HLQW13} showed that $\log L_f(N)\gg N$, which was for some time the best known lower bound. Then, very recently, Maynard and Rudnick \cite{MR19} provided a lower bound of the correct magnitude.
\begin{theorem}[{\cite[Theorem~1.2]{MR19}}]\label{thm:maynard-rudnick}
Let $f\in\mb Z[x]$ be irreducible over $\mb Q$ with degree $d\ge 2$. Then there is $c = c_f > 0$ such that
\[\log L_f(N)\gtrsim cN\log N.\]
\end{theorem}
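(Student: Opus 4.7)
\emph{Proof plan.} The plan is to introduce a threshold $T = T(N)$, roughly $T = C_f N^{d-1}$, and to restrict attention to primes $p > T$ for which the congruence $f(n) \equiv 0 \pmod{p}$ has at most one solution $n \in \{1, \ldots, N\}$. For such primes $\nu_p(L_f(N)) = \sum_{n \leq N} \nu_p(f(n))$, so the problem reduces to estimating the contribution of large primes to $\log \prod_{n \leq N} |f(n)|$.

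The first ingredient is a clean algebraic lemma: for every prime $p > C_f N^{d-1}$, with $C_f$ depending only on $f$, the congruence $f(n) \equiv 0 \pmod{p}$ has at most one solution $n \in [1, N]$. I would prove this by considering $g(x, y) = (f(x) - f(y))/(x - y) \in \mb Z[x, y]$, of degree $d - 1$ in each variable. If $n_1 \ne n_2 \in [1, N]$ both satisfied $p \mid f(n_i)$, then $p \mid (n_1 - n_2)\, g(n_1, n_2)$. Since $0 < |n_1 - n_2| < N < p$, necessarily $p \mid g(n_1, n_2)$; for $N$ large enough $f$ is strictly monotonic on $[1, N]$, so $g(n_1, n_2) \neq 0$, and $|g(n_1, n_2)| \leq C_f N^{d-1}$, forcing $p \leq C_f N^{d-1}$.

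The second ingredient is an analytic estimate of the smooth part of $\prod_n f(n)$. Setting $K = \mb Q(\alpha)$ for $\alpha$ a root of $f$ and writing $\rho_f(p)$ for the number of roots of $f$ modulo $p$, I would show by the prime ideal theorem that
\[
\sum_{p \leq T} \sum_{n \leq N} \nu_p(f(n)) \log p \sim N \log T
\]
uniformly in a range including $T = C_f N^{d-1}$. The heuristic is $\sum_{n \leq N} \nu_p(f(n)) \approx \rho_f(p)\, N/(p-1)$ combined with $\sum_{p \leq T} \rho_f(p) \log p/p \sim \log T$ (which follows from $\sum_{p \leq X} \rho_f(p) \log p \sim X$). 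Subtracting from $\sum_{n \leq N} \log|f(n)| \sim d N \log N$ at $T = C_f N^{d-1}$ yields
\[
\sum_{p > C_f N^{d-1}} \sum_{n \leq N} \nu_p(f(n)) \log p \sim N \log N.
\]
By the algebraic lemma, for every prime $p > C_f N^{d-1}$ we have $\nu_p(L_f(N)) = \sum_n \nu_p(f(n))$, and hence
\[
\log L_f(N) \;\geq\; \sum_{p > C_f N^{d-1}} \nu_p(L_f(N)) \log p \;\sim\; N \log N,
\]
giving the theorem with $c = 1$.

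The main obstacle is making the analytic estimate rigorous for $T$ extending beyond $N$, where one must handle ramified primes, the contribution of higher prime powers $p^k$ with $p^k$ approaching $N^d$, and the delicate transition from the averaged formula $\rho_f(p) N/(p-1)$ (valid for $p \leq N$) to the integer-valued count $M_p$ for $p > N$. The algebraic lemma is elementary; the real work is in the prime-ideal-theorem input and in controlling error terms uniformly across the relevant range of $T$.
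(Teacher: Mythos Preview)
Your algebraic lemma is correct, but the analytic estimate is a genuine gap, not a technicality. You want
\[
\sum_{p\le T}\alpha_p(N)\log p \;\sim\; N\log T
\]
at $T=C_fN^{d-1}$. The heuristic $\alpha_p(N)\approx \rho_f(p)\,N/(p-1)$ has pointwise error $O(1)$ once $p>N$ (the count of $n\le N$ with $p\mid f(n)$ is an integer in $[0,\rho_f(p)]$, not $\rho_f(p)N/p$), and summing that $O(1)$ over primes in $(N,\,C_fN^{d-1}]$ contributes, by the Prime Number Theorem, a term of order $N^{d-1}$, which dwarfs the intended main term $N\log T\asymp N\log N$. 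No form of the prime ideal theorem rescues this: the issue is not the average of $\rho_f(p)$ but the rounding error per prime. Worse, the complementary statement $\sum_{p>C_fN^{d-1}}\alpha_p(N)\log p\sim N\log N$ already \emph{equals} $\sum_{p>C_fN^{d-1}}\nu_p(L_f(N))\log p$ by your own uniqueness lemma, so establishing it is tantamount to the lower bound you seek; the argument is circular.

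The paper sidesteps this by keeping the threshold \emph{linear}. At $T=N$ (or $T=DN$ with $D=1+d|f_d|$) the analytic input is genuinely available: $\sum_{p\le N}\alpha_p(N)\log p\sim N\log N$ via Chebotarev, and the interval $(N,DN]$ contributes only $O(N)$. One can no longer have uniqueness at this threshold, but a crude bound $\alpha_p(N)\le d^2$ for $p>N$ already yields $\prod_{p>N}p^{\alpha_p(N)}\le \ell_f(N)^{d^2}$ and hence $c=(d-1)/d^2$, proving \cref{thm:maynard-rudnick}; a finite-differences argument (forming $\sum_j f(m_j)/\prod_{k\ne j}(m_j-m_k)$ from $d$ putative roots) sharpens this to at most $d-1$ roots in $[1,N]$ once $p>DN$, giving $c=1$. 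The idea you are missing is this trade-off: accept a bounded multiplicity ($d-1$ or $d^2$) instead of $1$, in exchange for a threshold where the smooth part of $\prod_n|f(n)|$ can actually be computed.
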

The proof given produces $c_f = \frac{d-1}{d^2}$, although a minor modification produces $c_d = \frac{1}{d}$. We prove the following improved bound, which in particular recovers \cref{conj:lcm} when $d = 2$. It also does not decrease with $d$, unlike the previous bound.
\begin{theorem}\label{thm:main}
Let $f\in\mb Z[x]$ be irreducible over $\mb Q$ with degree $d\ge 2$. Then
\[\log L_f(N)\gtrsim N\log N.\]
\end{theorem}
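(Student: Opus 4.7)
The plan is to lower-bound $\log L_f(N)$ by passing to the radical and counting distinct primes $p > N$ that divide some $f(n)$ with $n \in [1,N]$; each such prime contributes at least $\log N$ to $\log L_f(N)$, so it suffices to exhibit $\gtrsim N$ of them. I work throughout in the number field $K = \mb Q(\alpha)$ where $f(\alpha)=0$, using prime-number-theoretic information in $\mathcal{O}_K$.

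First I would establish the mean-value input. Elementarily $\sum_{n=1}^N \log|f(n)| = dN\log N + O(N)$. Decomposing by prime factorization and invoking the prime number theorem for $\mathcal{O}_K$ together with Burnside's lemma---which forces the average number of degree-one prime ideals of $\mathcal{O}_K$ lying above a rational prime to be exactly one---yields
\[
\sum_{p \leq N} \log p \sum_{n=1}^N v_p(f(n)) = N\log N + O(N),
\]
and hence, by subtraction,
\[
\sum_{p > N} \log p \sum_{n=1}^N v_p(f(n)) = (d-1) N\log N + O(N).
\]
Defining $r_p := |\{n \in [1,N] : p \mid f(n)\}|$ (which satisfies $r_p \leq d$ for $p > N$), and noting that contributions with $v_p(f(n)) \geq 2$ are sporadic for $p > N$ since $p^2 > N^2$ forces a highly restrictive congruence, one deduces $\sum_{p > N} r_p \log p = (d-1) N\log N + O(N)$.

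Next, I would convert this weighted sum over $(n,p)$-pairs into an unweighted count of distinct primes via Cauchy--Schwarz:
\[
\Big(\sum_{p > N} r_p \log p\Big)^2 \leq \Big(\sum_{p > N} \mathbf{1}[r_p \geq 1] \log p\Big) \Big(\sum_{p > N} r_p^2 \log p\Big),
\]
where the first right-hand factor lower-bounds $\log\on{rad}(L_f(N))$ and hence $\log L_f(N)$, while the second decomposes as $\sum_{p>N} r_p \log p + 2R_2$ with
\[
R_2 := \sum_{p>N}\binom{r_p}{2}\log p = \sum_{1 \leq n_1 < n_2 \leq N} \log\gcd(f(n_1),f(n_2))_{>N}.
\]

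The main obstacle is upper-bounding $R_2$ sharply enough for the Cauchy--Schwarz output to give the clean constant $c = 1$ uniformly in $d$. For each pair with $h = n_2 - n_1$, any prime $p > N$ dividing $\gcd(f(n_1), f(n_2))$ must divide $\on{Res}_x(f(x), f(x+h))$, an integer of size $O(|h|^{d^2})$; summing over pairs grouped by $h$ gives $R_2 = O(N\log N)$, but the implicit constant depends on $d$ in a way that, when substituted into Cauchy--Schwarz, only yields a $d$-dependent fraction of $N\log N$. Obtaining the uniform constant $c = 1$ must use additional structure: working intrinsically with prime ideals of $\mathcal{O}_K$ (so that one handles the $d$ roots of the single factor $(n-\alpha)$ rather than the $d^2$ ordered root-pairs of the resultant), or else employing a sharper inequality that bypasses the Cauchy--Schwarz loss altogether. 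This refinement is precisely the step where Sah's alternative proof must decisively improve on the Maynard--Rudnick method.
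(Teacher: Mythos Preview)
Your proposal is incomplete, and you correctly locate the gap yourself: the Cauchy--Schwarz step loses a $d$-dependent factor, and you do not actually supply the ``additional structure'' you speculate about at the end. The paper's argument bypasses Cauchy--Schwarz entirely, and also does not require your sporadicity claim that contributions with $v_p(f(n))\ge 2$ are negligible for $p>N$---a claim which is not obvious and would itself need justification.

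The decisive new ingredient is an elementary finite-difference lemma: for $p > DN$ with $D = 1 + d|f_d|$, at most $d-1$ integers $n\in[1,N]$ satisfy $p\mid f(n)$; that is, $r_p\le d-1$ rather than the trivial $r_p\le d$. Sketch for this case: if $m_1,\ldots,m_d\in[1,N]$ were $d$ distinct roots of $f$ modulo $p$, the divided difference
\[
A=\sum_{j=1}^d \frac{f(m_j)}{\prod_{k\neq j}(m_j-m_k)} = f_d(m_1+\cdots+m_d)+f_{d-1}
\]
is an integer with $|A|\le d|f_d|N + |f_{d-1}| < DN < p$; but $p\mid f(m_j)$ and $0<|m_j-m_k|<p$ force $p\mid A$, hence $A=0$, which bounds all $m_j$ by a constant depending only on $f$ and contradicts irreducibility once $N$ is large. (The paper proves the stronger statement $\#\{n\in[1,N]:p^i\mid f(n)\}\le d-i$ for all $1\le i\le d$.)

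With $r_p\le d-1$ for $p>DN$, no Cauchy--Schwarz is needed. The $p$-adic valuation of $L_f(N)$ is
\[
\max_{1\le n\le N} v_p(f(n))\ \ge\ \frac{1}{r_p}\sum_{n=1}^N v_p(f(n))\ \ge\ \frac{\alpha_p(N)}{d-1},
\]
so $L_f(N)^{d-1}\ge\prod_{p>DN}p^{\alpha_p(N)}$. Taking logarithms and using your mean-value input (together with the easy fact that primes in $(N,DN]$ contribute $O(N)$, by the prime number theorem and the crude bound $\alpha_p(N)\le d^2$) gives $(d-1)\log L_f(N)\ge (d-1)N\log N + O(N)$ directly. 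The saving of one unit in $r_p$---from $d$ to $d-1$---is exactly what turns the constant $\tfrac{d-1}{d}$ into $1$.
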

It is also interesting to consider the problem of estimating $\ell_f(N) = \on{rad}\on{lcm}(f(1),\ldots,f(n))$. (Recall that $\on{rad}(n)$ is the product of distinct primes dividing $n$.) It is easy to see that the proof of \cref{thm:maynard-rudnick} that was given in \cite{MR19} implies
\[\log\ell_f(N)\gtrsim c_dN\log N\]
for the same constant $c_d = \frac{d-1}{d^2}$ (or $c_d = \frac{1}{d}$ after slight modifications). We demonstrate an improved bound.
\begin{theorem}\label{thm:main-rad}
Let $f\in\mb Z[x]$ be irreducible over $\mb Q$ with degree $d\ge 2$. Then
\[\log\ell_f(N)\gtrsim\frac{2}{d}N\log N.\]
\end{theorem}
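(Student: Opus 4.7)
I would follow the template of Maynard--Rudnick, modified to track only distinct prime divisors. Two inputs are standard: (i) $\sum_{n \le N} \log |f(n)| = d N \log N + O(N)$, from the leading behaviour $|f(n)| \asymp n^d$, and (ii) Landau's prime ideal theorem for $K = \mathbb{Q}[x]/(f)$, which after a partial summation yields, for any threshold $T \to \infty$,
\[
\sum_{n \le N} \log\bigl(\text{$T$-smooth part of }f(n)\bigr) \;=\; N \log T + o(N\log N).
\]

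The crucial choice is $T = N^{d/2}$. For every prime $p > T$ and $n \le N$, $p^2 > N^d \gtrsim |f(n)|$, so $v_p(f(n)) \le 1$. Hence the $T$-rough part of $f(n)$ is squarefree and equals $B(n) := \prod_{p > T,\, p\mid f(n)} p$, so by subtraction,
\[
\sum_{n \le N} \log B(n) \;\sim\; dN\log N - \tfrac{d}{2} N\log N \;=\; \tfrac{d}{2} N\log N.
\]
Expand the left-hand side as $\sum_{p > T} r_p \log p$, where $r_p := \#\{n \le N : p \mid f(n)\}$. Since $p > N$, the $n \in [1,N]$ with $p \mid f(n)$ correspond to distinct roots of $f \bmod p$, so $r_p \le \rho_f(p) \le d$. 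Dividing by $d$,
\[
\log \ell_f(N) \;\ge\; \sum_{p > T,\, p \mid \ell_f(N)} \log p \;\ge\; \tfrac{1}{d} \sum_{n \le N} \log B(n) \;\gtrsim\; \tfrac{1}{2} N\log N.
\]
In particular, this already establishes the bound $\log \ell_f(N) \gtrsim (2/d) N\log N$ whenever $d \ge 4$, since $1/2 \ge 2/d$ in that range.

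The main obstacle is the remaining regime $d = 2, 3$, where $2/d > 1/2$; there the naive step $T_1 \ge T_2/d$ is wasteful because $r_p$ is typically much smaller than $d$. The idea is to control the excess $\sum_{p > T}(r_p - 1) \log p$: every prime $p > T$ with $r_p \ge 2$ must divide the divided difference $g(\alpha,\beta) = (f(\alpha) - f(\beta))/(\alpha - \beta)$ for some pair $\alpha \ne \beta$ in $[1,N]$, a symmetric polynomial of degree $d-1$ in each variable. For $d=2$, $g$ is linear in $\alpha+\beta$ with values $O(N)$, so at most $O(N/\log N)$ primes $>N$ appear as divisors and the total excess is $o(N\log N)$, recovering the sharp bound $N\log N = (2/d)N\log N$ in the spirit of Cilleruelo; for $d=3$, an analogous but more delicate estimate on $\omega_{>T}(g(\alpha,\beta))$, combined with a summation over $\alpha, \beta$, is needed to shave the constant from $1/2$ to $2/3$. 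Making this refinement uniform in $d$ (so the single bound $2/d$ works in one stroke) is the step I would expect to be most technical.
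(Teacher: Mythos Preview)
Your claimed input (ii), the smooth-part estimate
\[
\sum_{n\le N}\log\bigl(T\text{-smooth part of }f(n)\bigr)=N\log T+o(N\log N),
\]
is only available for thresholds $T\le N$; it does not follow from Landau's prime ideal theorem once $T=N^{d/2}$ with $d\ge 3$. The prime ideal theorem (or Chebotarev, after partial summation) gives $\sum_{p\le T}\frac{\rho_f(p)\log p}{p-1}\sim\log T$, and the passage to $\sum_{p\le T}\alpha_p(N)\log p$ uses the Hensel estimate $\alpha_p(N)=N\rho_f(p)/(p-1)+O(\log N/\log p)$. That estimate is meaningful only when $p\le N$: for $p>N$ there is at most one $n\in[1,N]$ per residue class, so $\alpha_p(N)$ is an erratic integer in $[0,d^2]$ rather than approximately $N\rho_f(p)/p$. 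Concretely, bounding $\sum_{N<p\le N^{d/2}}\alpha_p(N)\log p$ above by $(\tfrac d2-1)N\log N$ is the same as bounding $\sum_{p>N^{d/2}}\alpha_p(N)\log p$ below by $\tfrac d2 N\log N$, which is exactly the assertion that the $N^{d/2}$-rough part of $f(n)$ is large on average---a statement about large prime factors of polynomial values that is not a consequence of the prime ideal theorem and is in fact close in strength to what you are trying to prove. So the argument as written is circular for every $d\ge 3$, not only for $d=3$; only the $d=2$ case (where $T=N$ and (ii) is genuine) survives.

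The paper sidesteps this by keeping the threshold at $T=DN$ with $D$ a constant, so that the smooth-part estimate really is just (ii) for $T\le N$ plus an $O(N)$ correction for primes in $(N,DN]$. The price is that the rough part is no longer automatically squarefree, so one needs a multiplicity bound: the key lemma shows, via the same divided-difference quantity $A=\sum_j f(m_j)/\prod_{k\ne j}(m_j-m_k)$ you allude to but applied at every power $p^i$, that for $p>DN$ one has $\#\{n\le N:p^i\mid f(n)\}\le d-i$ and hence $\alpha_p(N)\le\binom{d}{2}$. Dividing $(d-1)N\log N$ by $\binom{d}{2}$ then gives $\tfrac{2}{d}N\log N$ uniformly in $d$, with no separate treatment of small $d$.
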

We conjecture that the radical of the LCM should be the same order of magnitude as the LCM.
\begin{conjecture}\label{conj:rad}
If $f\in\mb Z[x]$ is irreducible over $\mb Q$ with degree $d\ge 2$, then
\[\log\ell_f(N)\sim (d-1)N\log N\]
as $N\to\infty$.
\end{conjecture}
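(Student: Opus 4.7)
The conjecture is strictly stronger than \cref{conj:lcm}: since $\ell_f(N)\mid L_f(N)$, the asymptotic $\log\ell_f(N)\sim(d-1)N\log N$ would force the matching statement for $\log L_f(N)$ via the upper bound \cref{eq:upper-bound}. Conversely, \cref{conj:rad} is equivalent to \cref{conj:lcm} together with the squarefree-gap statement
\[
\log L_f(N)-\log\ell_f(N)\;=\;\sum_p\max\bigl(v_p(L_f(N))-1,\,0\bigr)\log p\;=\;o(N\log N),
\]
so my plan would be to attack these two pieces in parallel.

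For the squarefree gap, I would split primes at the threshold $p=N^{1/2}$. Each summand is trivially $O(\log N)$ (since $v_p(L_f(N))\log p\le\log\max_{n\le N}|f(n)|=O(\log N)$), and there are $O(N^{1/2}/\log N)$ primes up to $N^{1/2}$, giving a contribution of $O(N^{1/2})$ from small primes. For $p>N^{1/2}$ the question reduces to controlling the set of $n\le N$ admitting a prime $p>N^{1/2}$ with $p^2\mid f(n)$. For $d\le 3$ one can invoke classical squarefree-density results (Estermann, Hooley, Heath-Brown), and for higher $d$ one would either assume them conjecturally or aim for a weaker one-sided estimate---what is ultimately needed is a bound on $\sum\log p$ rather than a full count of exceptional $n$'s, which should be a more tractable target amenable to a direct sieve combined with the trivial inequality $\sum_{p^2\mid f(n)}\log p\le\tfrac12\log|f(n)|$.

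The decisive obstacle is matching the constant $d-1$ on the lower-bound side, i.e., improving \cref{thm:main} from $N\log N$ to $(d-1)N\log N$. The argument behind \cref{thm:main} effectively exploits a single residue class mod $p$ per prime, whereas the conjectured $(d-1)$-factor gain would need to come from summing over all $\rho_p$ roots of $f$ mod $p$, leveraging the fact that by Chebotarev the average $\rho_p$ over primes is $1$ while the weighted sum $\sum_{p\le N}\rho_p\log p$ distributes the $d$ potential roots across the prime residues of $[1,N]$ to account for the extra $d-2$ over the Maynard--Rudnick--style lower bound. The main difficulty is that values $f(n_1),f(n_2)$ with $n_1,n_2$ in distinct residues mod $p$ can share additional large prime factors, collapsing what should be independent contributions to $\log L_f(N)$ into a single factor. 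Ruling this out appears to require a strong quantitative independence of Frobenius actions on the roots of $f$ modulo distinct primes, and it is precisely this phenomenon that keeps \cref{conj:lcm} (and hence \cref{conj:rad}) open for $d\ge 3$.
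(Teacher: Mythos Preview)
This statement is a conjecture, not a theorem, and the paper does not prove it in general; the only case settled is $d=2$, which follows from \cref{thm:main-rad}. You correctly recognize that the problem remains open for $d\ge 3$ and offer a strategic outline rather than a proof, so there is no ``paper's proof'' to match against beyond the discussion in Section~\ref{sec:discussion}.

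Your decomposition of \cref{conj:rad} as \cref{conj:lcm} plus the squarefree-gap statement $\log L_f(N)-\log\ell_f(N)=o(N\log N)$ is correct and compatible with, though phrased differently from, the paper's own discussion. The paper instead characterizes \cref{conj:rad} directly as the assertion that the proportion of primes $p\mid Q(N)$ with $p^2\mid Q(N)$ tends to zero, and \cref{conj:lcm} as the proportion of primes dividing two distinct $f(m),f(n)$ tending to zero. Your squarefree-gap piece concerns primes with $p^2\mid f(n)$ for some single $n$; the remaining way to get $p^2\mid Q(N)$ is via two distinct $n$'s, which is exactly the obstruction to \cref{conj:lcm}. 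So the two framings agree.

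Your description of the mechanism behind \cref{thm:main} is not quite right. The paper does not ``exploit a single residue class mod $p$ per prime''; rather, Equation~\cref{eq:mod-bound} shows that for $p>DN$ at most $d-1$ values $n\in[1,N]$ satisfy $p\mid f(n)$, whence $\prod_{p>DN}p^{\alpha_p(N)}\le L_f(N)^{d-1}$, and dividing $(d-1)N\log N$ by $d-1$ gives the bound $N\log N$. Closing the gap to the conjectured constant amounts to showing that almost every large prime divides only one $f(n)$, which you do correctly identify as the core difficulty. Your appeal to Chebotarev and averaging $\rho_p$ over primes is not the route the paper takes and does not obviously help here, since the relevant primes $p>DN$ already have at most $d-1$ hits in $[1,N]$ regardless of $\rho_f(p)$; the issue is whether a positive proportion of them have two or more hits, not how many roots $f$ has modulo $p$.
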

Finally, we note that \cref{thm:main-rad} proves \cref{conj:rad} for $d = 2$.

In a couple of different directions, Rudnick and Zehavi \cite{RZ19} have studied the growth of $L_f$ along a shifted family of polynomials $f_a(x) = f_0(x) - a$, and Cilleruelo has asked for similar bounds in cases when $f$ is not irreducible as detailed by Candela, Ru\'{e}, and Serra \cite[Problem~4]{CRS18}, which may also be tractable directions to pursue.

\subsection{Commentary and setup}
Interestingly, we avoid analysis of ``Chebyshev's problem'' regarding the greatest prime factor $P^+(f(n))$ of $f(n)$, which is an essential element of the argument in \cite{MR19}. Our approach is to study the product
\[Q(N) = \prod_{n=1}^N |f(n)|.\]
We first analyze the contribution of small primes and linear-sized primes, which we show we can remove and retain a large product. Then we show that each large prime appears in the product a fixed number of times, hence providing a lower bound for the LCM and radical of the LCM. For convenience of our later analysis we write
\[Q(N) = \prod_p p^{\alpha_p(N)}.\]
Note that $\log Q(N) = dN\log N + O(N)$ by Stirling's approximation, if $d$ is the degree of $f$. Finally, let $\rho_f(m)$ denote the number of roots of $f$ modulo $m$.

\subsection*{Remark on notation}
Throughout, we use $g(n)\ll h(n)$ to mean $|g(n)|\le ch(n)$ for some constant $c$, $g(n)\lesssim h(n)$ to mean for every $\epsilon > 0$ we have $|g(n)|\le (1+\epsilon)h(n)$ for sufficiently large $n$, and $g(n)\sim h(n)$ to mean $\lim_{n\to\infty}\frac{g(n)}{h(n)} = 1$. Additionally, throughout, we will fix a single $f\in\mb Z[x]$ that is irreducible over $\mb Q$ and has degree $d\ge 2$. We will often suppress the dependence of constants on $f$. We will also write
\[f(x) = \sum_{i = 0}^d f_ix^i.\]

\subsection{Acknowledgements}
I thank Ze'ev Rudnick, Juanjo Ru\'{e}, and Mark Shusterman for helpful comments, suggestions, and references.

\section{Bounding small primes}\label{sec:bounding-small-primes}
The analysis in this section is very similar to that of \cite[Section~3]{MR19}, except that we do not use the resulting bounds to study the Chebyshev problem. We define
\[Q_S(N) = \prod_{p\le N} p^{\alpha_p(N)},\]
the part of $Q(N) = \prod_{n=1}^N|f(n)|$ containing small prime factors. The main result of this section is the following asymptotic.
\begin{proposition}\label{prop:small-primes}
We have $\log Q_S(N)\sim N\log N$.
\end{proposition}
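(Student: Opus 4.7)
The plan is to expand
\[ \alpha_p(N) = \sum_{k \ge 1} \#\{1 \le n \le N : p^k \mid f(n)\} \]
using the identity $v_p(m) = \sum_k \mathbf{1}[p^k \mid m]$, and to observe that the $n$ satisfying $p^k \mid f(n)$ lie in $\rho_f(p^k)$ residue classes modulo $p^k$, so the inner count equals $N\rho_f(p^k)/p^k + O(\rho_f(p^k))$. Only $k$ with $p^k \ll N^d$ matter since $|f(n)| \ll N^d$ on $[1,N]$. Multiplying by $\log p$ and summing over $p \le N$ and $k \ge 1$ decomposes $\log Q_S(N)$ into a main term plus errors, which I analyze separately.

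The dominant contribution, arising from $k=1$, is $N \sum_{p \le N} \rho_f(p)\log p/p$. The crucial input is Landau's prime ideal theorem applied to the number field $K = \mb Q[x]/(f)$: away from the finitely many primes dividing $\on{disc}(f)$ or the index $[\mathcal O_K : \mb Z[\alpha]]$, $\rho_f(p)$ equals the number of degree-one prime ideals of $\mathcal O_K$ above $p$, and the contribution of higher-degree prime ideals is $O(\sqrt{x}\log x)$, so $\sum_{p \le x} \rho_f(p)\log p \sim x$. Partial summation then yields $\sum_{p \le N} \rho_f(p)\log p/p \sim \log N$, producing the desired leading term $N\log N$.

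Three error sources must each be shown to contribute $O(N)$. The $O(\rho_f(p^k))$ rounding, summed over $p \le N$ and the $O(\log N/\log p)$ relevant values of $k$ using $\rho_f(p^k) \le d$ at good primes, gives $O(d\log N \cdot \pi(N)) = O(N)$. The $k \ge 2$ portion of the main term at good primes telescopes to $N \sum_p \rho_f(p)\log p/(p(p-1))$, which is $O(N)$ by absolute convergence. Most delicately, the finitely many primes dividing $\on{disc}(f)$ require a uniform bound $\rho_f(p^k) \le C(f)$ valid for all $k$: this follows from the B\'ezout identity $Af + Bf' = \on{Res}(f,f')$ (available since $f$ is squarefree), which forces $v_p(f'(n)) \le v_p(\on{Res}(f,f'))$ at any $n$ with $p^k \mid f(n)$ and $k$ sufficiently large, after which Hensel's lemma determines each such root from its reduction modulo $p^{v_p(\on{Res}(f,f'))+1}$. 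Given this bound, each bad prime contributes $O(N)$ by the same geometric-series estimate. The main obstacle is precisely this uniform control at bad primes: without it, a single bad prime could naively contribute $O(N\log N)$ and overwhelm the main term.
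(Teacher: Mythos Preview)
Your argument is correct and follows the same route as the paper's: compute $\alpha_p(N)$ via Hensel's lemma as $N\rho_f(p)/(p-1)+O(\log N/\log p)$ at unramified primes and $O_f(N/p)$ at the finitely many ramified primes (the paper simply quotes this as a lemma from Maynard--Rudnick, whereas you derive it inline, including the uniform bound $\rho_f(p^k)\le C(f)$ at bad primes), then sum using Landau's prime ideal theorem~/ Chebotarev to obtain $\sum_{p\le N}\rho_f(p)\log p/p\sim\log N$. The only difference is packaging---you reprove the cited lemma from scratch rather than invoking it.
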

\begin{remark}
This asymptotic directly implies the earlier stated Equation~\cref{eq:upper-bound}.
\end{remark}
The argument is a simple analysis involving Hensel's Lemma and the Chebotarev density theorem. The Hensel-related work has already been done in \cite{MR19}.
\begin{lemma}[{\cite[Lemma~3.1]{MR19}}]\label{lem:hensel}
Fix $f\in\mb Z[x]$ and assume that it has no rational zeros. Let $\rho_f(m)$ denote the number of roots of $f$ modulo $m$. Then if $p\nmid\on{disc}(f)$ we have
\[\alpha_p(N) = N\frac{\rho_f(p)}{p-1}+O\left(\frac{\log N}{\log p}\right)\]
and if $p\mid\on{disc}(f)$ we have
\[\alpha_p(N)\ll\frac{N}{p},\]
where the implicit constant depends only on $f$.
\end{lemma}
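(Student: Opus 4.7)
The plan is to analyze $\alpha_p(N) = \sum_{n=1}^{N} v_p(f(n))$ via the level-set decomposition
$$\alpha_p(N) = \sum_{k \ge 1} R_k(N), \qquad R_k(N) := \#\{n \in [1,N] : p^k \mid f(n)\}.$$
Because $f(n) \equiv 0 \pmod{p^k}$ forces $n$ into one of $\rho_f(p^k)$ residue classes modulo $p^k$, each hitting $[1,N]$ in $N/p^k + O(1)$ elements, one has $R_k(N) = \tfrac{N}{p^k}\rho_f(p^k) + O(\rho_f(p^k))$. Since $|f(n)| \le C_f N^d$ and $f$ has no rational zeros, $R_k = 0$ as soon as $p^k > C_f N^d$, so the sum effectively terminates at $K \asymp d\log N/\log p$.

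For $p \nmid \on{disc}(f)$, the reduction $\bar f \bmod p$ is separable, so every root of $\bar f$ is simple and the standard Hensel's lemma lifts it uniquely to each $p^k$. This gives $\rho_f(p^k) = \rho_f(p) \le d$ for every $k$, whence
$$\alpha_p(N) = N\rho_f(p)\sum_{k=1}^{K} p^{-k} + O(K\rho_f(p)) = \frac{N\rho_f(p)}{p-1} + O\Bigl(\frac{\log N}{\log p}\Bigr),$$
where $p^K \gtrsim N^d$ with $d \ge 2$ absorbs the geometric-series tail $O(N/p^K) = O(N^{1-d})$ into the stated error, and $\rho_f(p) \le d$ controls the $O(K \rho_f(p))$ term.

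For $p \mid \on{disc}(f)$ the obstacle is that $\rho_f(p^k)$ can inflate with $k$. I would handle this by proving a uniform bound $\rho_f(p^k) \le C(f)$ via the strong Hensel criterion: any $a$ with $v_p(f(a)) > 2 v_p(f'(a))$ lifts to a unique root in $\mb Z_p$. Using a Bezout identity $u(x)f(x) + v(x)f'(x) = \on{disc}(f)$ in $\mb Q[x]$ (valid since the invocation of this case forces $\on{disc}(f) \ne 0$), one bounds $v_p(f'(a)) \le D_p := v_p(\on{disc}(f))$ at any root $a$ of $f$ modulo a sufficiently high power of $p$. Thus for $k > 2 D_p$ each root mod $p^k$ lifts uniquely to a $p$-adic root, so $\rho_f(p^k) \le d$; the trivial $\rho_f(p^k) \le p^{2 D_p}$ handles the finitely many smaller $k$. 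Plugging into the same sum yields $\alpha_p(N) \ll_f N/(p-1) + \log N/\log p \ll_f N/p$, since bad primes form a finite $f$-dependent set (so $p$ is bounded and the log term is absorbed for large $N$). The main obstacle is precisely this uniform control on $\rho_f(p^k)$ in the ramified case; the rest is routine bookkeeping with geometric sums and a logarithmic cutoff.
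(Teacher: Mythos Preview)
The paper does not prove this lemma; it is quoted verbatim from \cite[Lemma~3.1]{MR19} and used as a black box. So there is no in-paper argument to compare against. That said, your proposal is the standard proof and is essentially what \cite{MR19} does: write $\alpha_p(N)=\sum_{k\ge 1}\#\{n\le N:p^k\mid f(n)\}$, use $\rho_f(p^k)$ residue classes per level, truncate at $K\asymp d\log N/\log p$ via $|f(n)|\ll N^d$, and invoke Hensel. The unramified case is handled exactly as you describe.

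One small imprecision in the ramified case: the Bezout identity you want is $U(x)f(x)+V(x)f'(x)=\on{Res}(f,f')$ with $U,V\in\mb Z[x]$, not literally $\on{disc}(f)$ (they differ by a factor of $\pm f_d$, and in any event all you need is \emph{some} nonzero integer $M=M_f$). From $V(a)f'(a)=M-U(a)f(a)$ you correctly extract $v_p(f'(a))\le v_p(M)$ whenever $v_p(f(a))>v_p(M)$, and then strong Hensel gives $\rho_f(p^k)\le d$ for $k>2v_p(M)$. Also note your justification ``the invocation of this case forces $\on{disc}(f)\ne 0$'' is not literally right (if $\on{disc}(f)=0$ then every prime divides it), but in the paper's setting $f$ is irreducible so $\on{disc}(f)\ne 0$ automatically, and the ramified primes form a finite set; this is what lets you absorb the $O(\log N)$ term into $O_f(N/p)$. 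With those cosmetic fixes the argument is complete.
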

\begin{proof}[Proof of \cref{prop:small-primes}]
We use \cref{lem:hensel}. Noting that the deviation of the finitely many ramified primes from the typical formula is linear-sized, we will be able to ignore them with an error of $O(N)$. We thus have
\begin{align*}
\log Q_S(N) &= \sum_{p\le N}\alpha_p(N)\log p = \sum_{p\le N}N\frac{\log p}{p-1}\rho_f(p) + O\left(\sum_{p\le N}\log N\right) + O(N)\\
&= N\sum_{p\le N}\frac{\log p}{p-1}\rho_f(p) + O(N) = N\log N + O(N),
\end{align*}
using the Chebotarev density theorem alongside the fact that $f$ is irreducible over $\mb Q$ in the last equation (see e.g. \cite[Equation~(4)]{N21}).
\end{proof}

\section{Removing linear-sized primes}\label{sec:removing-linear-sized-primes}
We define
\[Q_{LI}(N) = \prod_{N < p\le DN}p^{\alpha_p(N)},\]
for appropriately chosen constant $D = D_f$. We will end up choosing $D = 1 + d|f_d|$ or so, although any greater constant will also work for the final argument. The result main result of this section is the following.
\begin{proposition}\label{prop:linear-primes}
We have $\log Q_{LI}(N) = O(N)$.
\end{proposition}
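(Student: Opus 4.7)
The plan is to establish a uniform pointwise bound $\alpha_p(N) = O(1)$ (with implicit constant depending only on $f$) for all primes $p$ in the range $(N, DN]$, and then apply the Prime Number Theorem to sum over such primes. The key observation, which avoids any deep input like Hensel's lemma, is that both the number of $n \in \{1,\ldots,N\}$ contributing to $\alpha_p(N)$ and the $p$-adic valuation at each such $n$ are separately controlled by the degree $d$.

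First I would note that for $p > N$, the integers $1,2,\ldots,N$ occupy pairwise distinct residues modulo $p$. Hence
\[\#\{n \in \{1,\ldots,N\} : p \mid f(n)\} \le \rho_f(p) \le d,\]
where the bound $\rho_f(p) \le d$ holds whenever $p \nmid f_d$, i.e., for all but finitely many primes (and in particular for all $p > N$ once $N$ is large enough). For each such $n$, the trivial bound $p^{v_p(f(n))} \le |f(n)| \ll N^d$ combined with $\log p > \log N$ gives
\[v_p(f(n)) \le \frac{\log|f(n)|}{\log p} \le \frac{d\log N + O(1)}{\log N} = d + o(1),\]
so $v_p(f(n)) \le d$ for $N$ sufficiently large. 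Multiplying these two bounds yields $\alpha_p(N) \le d^2$ uniformly for $p \in (N, DN]$.

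Summing over such primes and invoking the Prime Number Theorem in the form $\sum_{p\le x}\log p \sim x$,
\[\log Q_{LI}(N) = \sum_{N < p \le DN} \alpha_p(N)\log p \le d^2 \sum_{N<p\le DN}\log p = O\bigl((D-1)N\bigr) = O(N),\]
which is the desired bound. The finitely many excluded primes dividing $f_d$ contribute only $O(\log N)$ in total, well within the $O(N)$ error. There is no significant obstacle: the only nontrivial input is the Chebyshev-type estimate from the Prime Number Theorem, and the argument is otherwise elementary. The specific value $D = 1 + d|f_d|$ mentioned earlier plays no role in this proposition — any fixed $D > 1$ suffices — so the constant $D$ is presumably tuned to serve a later step in the analysis of very large primes $p > DN$.
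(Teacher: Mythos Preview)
Your proof is correct and essentially identical to the paper's: the paper also proves $\alpha_p(N)\le d^2$ for $p>N$ by bounding the number of $n$ with $p\mid f(n)$ by $d$ and the $p$-adic valuation of each such $f(n)$ by $d$, then sums $\log p$ over $N<p\le DN$ using the Prime Number Theorem. Your remark that the specific value of $D$ is irrelevant here and is tuned for the later analysis of primes $p>DN$ is also accurate.
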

In order to prove this, we show that all large primes appear in the product $Q(N)$ a limited number of times.
\begin{lemma}\label{lem:naive-multiplicity}
Let $N$ be sufficiently large depending on $f$, and let $p > N$ be prime. Then
\[\alpha_p(N)\le d^2.\]
\end{lemma}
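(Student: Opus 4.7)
The bound $\alpha_p(N)\le d^2$ factors as ``at most $d$ values of $n$ contribute, and each contributes at most $d$.'' I would establish these two halves separately.

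First I would unpack the definition: $\alpha_p(N) = \sum_{n=1}^N v_p(f(n))$, where $v_p$ denotes the $p$-adic valuation (this is well defined because $f$, being irreducible over $\mb Q$ of degree $d\ge 2$, has no integer roots, so $f(n)\ne 0$ for all $n$). The valuation $v_p(f(n))$ is nonzero exactly when $n$ reduces mod $p$ to a root of $f$ in $\mb F_p$. Since $\deg f = d$, the polynomial $f$ has at most $d$ distinct roots modulo $p$. Because $p > N$, each residue class modulo $p$ contains at most one element of $\{1,\ldots,N\}$, so there are at most $d$ values of $n\in\{1,\ldots,N\}$ for which $p\mid f(n)$.

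Next I would bound the individual contributions $v_p(f(n))$. Writing $f(x) = \sum_{i=0}^d f_i x^i$, for any $1\le n\le N$ we have $|f(n)|\le C_f N^d$ for some constant $C_f$ depending only on $f$. Therefore
\[
v_p(f(n))\le \frac{\log |f(n)|}{\log p}\le \frac{d\log N + \log C_f}{\log p}\le d + \frac{\log C_f}{\log N},
\]
using $p>N$ in the last step. Since $v_p(f(n))$ is a nonnegative integer and the right-hand side is strictly less than $d+1$ once $N$ is large enough (depending on $f$), we conclude $v_p(f(n))\le d$ for all such $n$.

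Combining, at most $d$ terms in the sum defining $\alpha_p(N)$ are nonzero, and each is at most $d$, giving $\alpha_p(N)\le d^2$ for all $N$ sufficiently large in terms of $f$. There is no real obstacle here; the only point requiring a moment's care is keeping track of $N$ being large enough that the $O(1/\log N)$ slack in the valuation bound shaves off properly to the integer $d$, which is why the statement is phrased for $N$ sufficiently large.
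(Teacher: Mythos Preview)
Your proof is correct and is essentially identical to the paper's: both arguments bound the number of $n\in[1,N]$ with $p\mid f(n)$ by $d$ via the root count modulo $p$ and the inequality $p>N$, and bound each individual valuation by $d$ via the size estimate $|f(n)|\le C_fN^d<p^{d+1}$ for $N$ large. The paper phrases the second step as $p^{d+1}>N^{d+1}\ge |f(n)|$ rather than passing through logarithms, but this is the same computation.
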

\begin{proof}
Note that $f\equiv 0\pmod{p}$ has at most $d$ solutions, hence at most $d$ values of $n\in [1,N]$ satisfy $p|f(n)$ since $p > N$. For those values, we see $p^{d+1} > N^{d+1}\ge |f(n)|$ for all $n\in [1,N]$ if $N$ is sufficiently large, and $f$ is irreducible hence has no roots. Thus $p^{d+1}$ does not divide any $f(n)$ when $n\in [1,N]$.

Therefore $\alpha_p(N)$ is the sum of at most $d$ terms coming from the values $f(n)$ that are divisible by $p$. Each term, by the above analysis, has multiplicity at most $d$. This immediately gives the desired bound.
\end{proof}
\begin{proof}[Proof of \cref{prop:linear-primes}]
Using \cref{lem:naive-multiplicity} we find
\[\log Q_{LI}(N)\le d^2\sum_{N < p\le DN}\log p = O(N)\]
by the Prime Number Theorem.
\end{proof}

\section{Multiplicity of large primes}\label{sec:multiplicity-of-large-primes}
Note that \cref{lem:naive-multiplicity} is already enough to recreate \cref{thm:maynard-rudnick}. Indeed, we see that
\[\log\frac{Q(N)}{Q_S(N)} = (d-1)N\log N + O(N)\]
from $Q(N) = dN\log N + O(N)$ and \cref{prop:small-primes}. Furthermore, by definition and by \cref{lem:naive-multiplicity},
\[\frac{Q(N)}{Q_S(N)} = \prod_{p > N}p^{\alpha_p(N)}\le\prod_{p > N, p|Q(N)}p^{d^2}\le\ell_f(N)^{d^2}\le L_f(N)^{d^2}.\]
This immediately gives the desired result (and recreates the constant $\frac{d-1}{d^2}$ appearing in the proof given in \cite{MR19}).

In order to improve this bound, we will provide a more refined analysis of the multiplicity of large primes. More specifically, we will show that we have a multiplicity of $\frac{d(d-1)}{2}$ for primes $p > DN$, with $D$ chosen as in \cref{sec:removing-linear-sized-primes}.
\begin{lemma}\label{lem:refined-multiplicity}
Let $N$ be sufficiently large depending on $f$, and let $p > DN$ be prime, where $D = 1+d|f_d|$. Then
\[\alpha_p(N)\le\frac{d(d-1)}{2}.\]
\end{lemma}
\begin{proof}
Fix prime $p > DN$. As in the proof of \cref{lem:naive-multiplicity}, when $N$ is large enough in terms of $f$, we have that $p^{d+1}$ never divides any $f(n)$ for $n\in [1,N]$. Thus for $1\le i\le d+1$ let $b_i = \#\{n\in [1,N]: p^i|f(n)\}$, where we see $b_{d+1}=0$. Note that
\[\alpha_p(N) = \sum_{i = 1}^d i(b_i-b_{i+1}) = \sum_{i=1}^d b_i.\]
We claim that $b_i\le d-i$ for all $1\le i\le d$, which immediately implies the desired result.

Suppose for the sake of contradiction that $b_i\ge d - i + 1$ for some $1\le i\le d$. Then let $m_1, \ldots, m_{d-i+1}$ be distinct values of $m\in[1,N]$ such that $p^i|f(m)$. Consider the value
\[A = \sum_{j=1}^{d-i+1}\frac{f(m_j)}{\prod_{k\neq j}(m_j-m_k)}.\]
We have from the standard theory of polynomial identities that
\begin{align*}
A &= \sum_{\ell=0}^df_\ell\sum_{j=1}^{d-i+1}\frac{m_j^{\ell}}{\prod_{k\neq j}(m_j-m_k)}\\
&= \sum_{\ell=d-i}^df_\ell\sum_{a_1+\cdots+a_{d-i+1}=\ell-(d-i)}\prod_{j=1}^{d-i+1}m_j^{a_j},
\end{align*}
where the inner sum is over all tuples $(a_1,\ldots,a_{d-i+1})$ of nonnegative integers that sum to $\ell-(d-i)$. Therefore $A\in\mb Z$. Furthermore, since $p^i|f(m_j)$ for all $1\le j\le d-i+1$, we have from the definition of $A$ that
\[p^i|A\prod_{1\le j < k\le d-i+1}(m_j-m_k).\]
Note that each $m_j-m_k$ is nonzero and bounded in magnitude by $N < p$, hence we deduce $p^i|A$.

But from the above formula and the triangle inequality we have
\begin{align*}
|A| &= \left|\sum_{\ell=d-i}^df_\ell\sum_{a_1+\cdots+a_{d-i+1}=\ell-(d-i)}\prod_{j=1}^{d-i+1}m_j^{a_j}\right|\\
&\le\sum_{\ell=d-i}^d|f_\ell|\binom{\ell}{d-i}N^{\ell-(d-i)}\\
&\le (1+|f_d|d^i)N^i
\end{align*}
for sufficiently large $N$ in terms of $f$, using the fact that there are $\binom{\ell}{d-i}$ tuples of nonnegative integers $(a_1,\ldots,a_{d-i+1})$ with sum $\ell-(d-i)$ and that $|m_j|\le N$ for all $1\le j\le d-i+1$.

Thus, as $p > DN\ge (1+|f_d|d)N$, we have
\[|A|\le (1+|f_d|d^i)N^i\le (1+|f_d|d)^iN^i < p^i.\]
Combining this with $p^i|A$, we deduce $A = 0$.

However, we will see that this leads to a contradiction as the ``top-degree'' term of $A$ is too large in magnitude for this to occur. First, we claim that if $1\le i\le d$ and $d-i\le\ell\le d$, then
\begin{equation}\label{eq:am-gm}\tag{4.1}
\frac{\sum_{a_1+\cdots+a_{d-i+1}=\ell-(d-i)}\prod_{j=1}^{d-i+1}m_j^{a_j}}{\sum_{j=1}^{d-i+1}m_j^{\ell-(d-i)}}\in [1,2^d].
\end{equation}
Indeed, note that each $m_j > 0$ and the denominator occurs as a subset of the terms in the numerator, hence the desired fraction is always at least $1$. For an upper bound, simply use the well-known AM-GM inequality. As it turns out, a sharp upper bound for the above is $\frac{1}{d-i+1}\binom{\ell}{d-i}$, which does not exceed $2^d$ for the given range of $i$ and $\ell$.

Next, we see that, using Equation~\cref{eq:am-gm} and the triangle inequality,
\begin{align*}
|A| &= \left|\sum_{\ell=d-i}^df_\ell\sum_{a_1+\cdots+a_{d-i+1}=\ell-(d-i)}\prod_{j=1}^{d-i+1}m_j^{a_j}\right|\\
&\ge|f_d|\sum_{a_1+\cdots+a_{d-i+1}=i}\prod_{j=1}^{d-i+1}m_j^{a_j}-\sum_{\ell=d-i}^{d-1}|f_\ell|\sum_{a_1+\cdots+a_{d-i+1}=\ell-(d-i)}\prod_{j=1}^{d-i+1}m_j^{a_j}\\
&\ge |f_d|\sum_{j=1}^{d-i+1}m_j^i-2^d\sum_{\ell=d-i}^{d-1}|f_\ell|\sum_{j=1}^{d-i+1}m_j^{\ell-(d-i)}\\
&= \sum_{j=1}^{d-i+1}f^\ast(m_j),
\end{align*}
where we define $f^\ast(x) = |f_d|x^i-2^d\sum_{\ell=d-i}^{d-1}|f_\ell|x^{\ell-(d-i)}$. But since $A = 0$ and $f^\ast$ clearly has a global minimum over the positive integers, we immediately deduce that $|m_j|$ for all $1\le j\le d-i+1$ is bounded in terms of some constant depending only on $f$ and $d = \deg f$.

But then, in particular, we also have $|f(m_1)| < C_f$ for some constant $C_f$ depending only on $f$, yet it is divisible by $p > DN$. For $N$ sufficiently large in terms of $f$, this can only happen if $f(m_1) = 0$, but since $f$ is irreducible over $\mb Q$ and $\deg f = d\ge 2$ this is a contradiction! Therefore we conclude that in fact $b_i\le d-i$ for all $1\le i\le d$, which as remarked above finishes the proof.
\end{proof}
We have actually proven something stronger, namely that for this range of $p$ we have at most $d-i$ values $n\in [1,N]$ with $p^i|f(n)$. In particular, this implies that for $p > DN$ we have
\begin{equation}\label{eq:mod-bound}\tag{4.2}
\#\{n\in[1,N]:p|f(n)\}\le d - 1.
\end{equation}

\section{Finishing the argument}\label{sec:finishing-the-argument}
\begin{proof}[Proof of \cref{thm:main}]
The argument is similar to the one at the beginning of \cref{sec:multiplicity-of-large-primes}, but refined. We have
\[\log\frac{Q(N)}{Q_S(N)Q_{LI}(N)} = (d-1)N\log N+O(N)\]
by $Q(N) = dN\log N + O(N)$ and \cref{prop:small-primes,prop:linear-primes}. Furthermore, by definition and by Equation~\cref{eq:mod-bound},
\[\frac{Q(N)}{Q_S(N)Q_{LI}(N)} = \prod_{p>DN}p^{\alpha_p(N)}\le L_f(N)^{d-1}.\]
The inequality comes from the fact that for $p > DN > N$, there are at most $d-1$ values of $n\in [1,N]$ with $p|f(n)$ from Equation~\cref{eq:mod-bound}, and the LCM takes the largest power of $p$ from those involved hence has a power of at least $\frac{\alpha_p(N)}{d-1}$ on $p$. Taking logarithms, we deduce
\[(d-1)\log L_f(N)\ge(d-1)N\log N + O(N),\]
which immediately implies the result since $d\ge 2$.
\end{proof}
\begin{proof}[Proof of \cref{thm:main-rad}]
The argument is essentially identical to the one at the beginning of \cref{sec:multiplicity-of-large-primes}, but with a better multiplicity bound from \ref{lem:refined-multiplicity}. We have
\[\log\frac{Q(N)}{Q_S(N)Q_{LI}(N)} = (d-1)N\log N+O(N)\]
by $Q(N) = dN\log N + O(N)$ and \cref{prop:small-primes,prop:linear-primes}. Furthermore, by definition and by \cref{lem:refined-multiplicity},
\[\frac{Q(N)}{Q_S(N)Q_{LI}(N)} = \prod_{p>DN}p^{\alpha_p(N)}\le\prod_{p>DN,p|Q(N)}p^{\frac{d(d-1)}{2}}\le\ell_f(N)^{\frac{d(d-1)}{2}}.\]
Taking logarithms, we deduce
\[\frac{d(d-1)}{2}\log\ell_f(N)\ge(d-1)N\log N + O(N),\]
which immediately implies the result since $d\ge 2$.
\end{proof}

\section{Discussion}\label{sec:discussion}
We see from our approach that the major obstruction to proving \cref{conj:lcm} is the potential for large prime factors $p > N$ to appear multiple times in the product $Q(N)$. In particular, it is possible to show that \cref{conj:rad} is equivalent to the assertion that
\[\lim_{N\to\infty}\frac{\#\{p\text{ prime}:p^2|Q(N)\}}{\#\{p\text{ prime}:p|Q(N)\}} = 0.\]
Indeed, the bounds we have given are sufficient to show that there are $\Theta(N)$ prime factors of $Q(N)$, of which only $O\left(\frac{N}{\log N}\right)$ are less than $DN$. Therefore the asymptotic size of the LCM is purely controlled by whether multiplicities for large primes in $\left[2,\frac{d(d-1)}{2}\right]$ appear a constant fraction of the time or not (noting that $\log p = \Theta(\log N)$ for these large primes, so that the sizes of their contributions are the same up to constant factors).

Similarly, \cref{conj:lcm} is equivalent to the assertion that
\[\lim_{N\to\infty}\frac{\#\{p\text{ prime}: \exists\, 1\le m<n\le N{:}\,\, p|f(m),p|f(n)\}}{\#\{p\text{ prime}:p|Q(N)\}} = 0.\]

Our bound for \cref{conj:rad} corresponds to using the fact that we can upper bound the multiplicities for all primes $p > DN$ by $\frac{d(d-1)}{2}$. In general, smaller multiplicities other than $1$ could be possible but infrequent, which may be a direction to further approach \cref{conj:lcm} and \cref{conj:rad}.

\bibliographystyle{plain}
\bibliography{main}

\end{document}